\documentclass{amsart}

\usepackage{amsmath}
\usepackage{amssymb}
\usepackage{xcolor}

\def\pa{\partial}
\def\var{\varepsilon}
\def\ov{\overline}
\def\R{{\Bbb R}}

\newtheorem{Thm}{Theorem}
\newtheorem{Prop}[Thm]{Proposition}

\def\N{{\Bbb N}}

\begin{document}

\title[4-species diffusive model]{Simple proofs for the existence of smooth
  solutions to a reaction-diffusion system modeling reversible chemistry}

\author[H. Bouton]{Hector Bouton$^{(1)}$}
\author[L. Desvillettes]{Laurent Desvillettes$^{(2)}$}
\author[H. Dietert]{Helge Dietert$^{(3)}$}

\address{$(1)$ Universit\'e Paris Cit\'e and Sorbonne Universit\'e, CNRS, Institut de Math\'ematiques de Jussieu-Paris Rive Gauche}
\email{bouton@imj-prg.fr}
\address{$(2)$ Universit\'e Paris Cit\'e and Sorbonne Universit\'e, CNRS, IUF, Institut de Math\'ematiques de Jussieu-Paris Rive Gauche}
\email{desvillettes@math.univ-paris-diderot.fr}
\address{$(3)$ Universit\'e Paris Cit\'e and Sorbonne Universit\'e, CNRS, Institut de Math\'ematiques de Jussieu-Paris Rive Gauche}
\email{helge.dietert@imj-prg.fr}


\begin{abstract}
  We present in this work a very short proof for the existence, uniqueness and
  smoothness in dimensions $d\le3$ of the system of reaction diffusion
  $ \pa_t a_i - d_i \, \Delta a_i = (-1)^i \,(a_1\,a_3 - a_2\,a_4)$, where $a_i \ge 0$
  model the concentrations of chemical species undergoing a chemical reaction
  and diffusing (each with its diffusion rate $d_i>0$) in a bounded container.
\end{abstract}

\maketitle

\noindent{\small{\textbf{Classification AMS 2010:} 35A01, 35A09, 35K57, 35Q92.}}
	\medskip

\noindent{\small{\textbf{Keywords:} Chemical reaction-diffusion systems; Entropy method, Strong solutions to systems of PDEs}}

\section{Introduction} \label{intro}

 In this paper, we consider the system of reaction diffusion 
\begin{equation}\label{pp1}
\pa_t a_i - d_i \Delta a_i = (-1)^i\, (a_1\,a_3 - a_2\, a_4),
\end{equation}
\begin{equation}\label{pp2}
 \forall t \ge0, x\in \pa\Omega, \qquad  \nabla a_i \cdot n = 0 , 
\end{equation}
with initial data $a_i^{in} \ge 0$. Here $a_i := a_i(t,x) \ge 0$, $i=1,\dots,4$, are the
unknowns of the system, and the variables are $t \ge 0$ and $x\in \Omega$, where $\Omega$ is a
bounded smooth open set of $\R^d$, $d \in \N - \{0\}$. The diffusion rates $d_i$
are strictly positive, and $n = n(x)$ is the outward unit normal vector at point
$x \in \pa\Omega$.  \medskip

This system models, according to the mass action law, the evolution of 4
chemical species $A_1,\dots,A_4$, which diffuse in a bounded container
(represented by $\Omega$) in which they are confined (whence the homogeneous Neumann
boundary condition), and undergo the reversible reaction $A_1 + A_3 \rightleftharpoons A_2 + A_4$
(for presentation, we assume that the reaction rates are $k=l=1$). We focus on
dimensions $d\le3$, which are the most interesting for applications. 
 \medskip

This system has attracted a lot of attention in the past years. Weak solutions
in all dimension were known to exist at least since the 2000s (cf.\ \cite{DFPV},
and in a more general context \cite{fischer}), while strong (unique) solutions
were built in \cite{CDF} in dimension 1 and 2. In higher dimension, such strong
solutions were provided in the papers \cite{souplet}, \cite{CGV} and \cite{FMT}
(cf.\ also the pioneering work of \cite{kanel}).  \medskip

In this work, we intend to provide a complete self-contained proof of existence
to strong solutions to system \eqref{pp1}, \eqref{pp2} in dimension \(d\le 3\),
which is very short. It uses heavily the entropy structure of the equation,
together with some ideas coming from our previous work \cite{BDDmain}, including
an interpolation inequality that we present here in detail using only elementary
computations.
Note that this entropy structure has already been exploited in \cite{souplet} and in \cite{Tan18} and \cite{STY23} in low dimension. We think that the proof presented here is quite robust and we
intend to illustrate this robustness in a forthcoming work devoted to a
degenerate case \cite{BDDdeg}. 
 \medskip

To be precise, we present here a short proof of the following result (for which
already existing proofs can be found in \cite{souplet}, \cite{CGV} and
\cite{FMT}):

\begin{Thm}\label{ercwd}
  Let $\Omega \subset \R^d$, \(d \le 3\), be a smooth bounded open set (with $n(x)$ the
  outward unit normal vector at point $x \in \pa\Omega$), and
  $d_i>0, a_i^{in} = a_i^{in}(x) \ge 0$ ($i=1,\dots,4$), be diffusion rates and
  smooth $C^{\infty}({\ov{\Omega}})$ initial data which are compatible with the
  homogeneous Neumann boundary conditions.

  Then there exists a unique smooth
  $(a_i)_{i=1,\dots,4} \in C^{\infty}(\R_+ \times {\ov{\Omega}})$ solution to the parabolic
  problem
  \begin{equation}\label{pp1n}
    \pa_t a_i - d_i \Delta a_i = (-1)^i\, (a_1\,a_3 - a_2\, a_4),
  \end{equation}
  \begin{equation}\label{pp2n}
    \forall t \ge 0,  x\in \pa\Omega, \qquad  \nabla a_i \cdot n = 0 , 
  \end{equation}
  for \(i=1,\dots,4\), and initial data $a_i^{in}$.
\end{Thm}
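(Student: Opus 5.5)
The plan is to combine local well-posedness with a priori estimates that exclude blow-up in finite time.

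\emph{Local theory and continuation.} Since the nonlinearity is locally Lipschitz and the data are smooth and compatible, standard semigroup and parabolic Schauder theory give a unique local smooth solution on a maximal interval $[0,T^*)$. If $T^*<\infty$, then $\|a_i(t)\|_{L^\infty}$ must blow up as $t\to T^*$. Nonnegativity follows from quasi-positivity: when $a_i=0$, the right-hand side of the equation for $a_i$ is $\ge 0$. It therefore suffices to show that $\sup_{[0,T]}\|a_i\|_{L^\infty}<\infty$ for every finite $T$. Once $L^\infty$ bounds are available, smoothness follows by bootstrapping with parabolic regularity.

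\emph{Basic a priori estimates.} Mass conservation gives $L^\infty_t L^1_x$ bounds on $a_1+a_2$, $a_1+a_4$, and so on. The entropy $H=\sum_i\int (a_i\ln a_i - a_i+1)$ satisfies
\[
\frac{d}{dt}H+\sum_i 4d_i\int|\nabla\sqrt{a_i}|^2+\int(a_1a_3-a_2a_4)\ln\frac{a_1a_3}{a_2a_4}=0 .
\]
This yields $\sqrt{a_i}\in L^2_tH^1_x$, and hence, by Sobolev embedding in $d\le3$, $a_i\in L^1_tL^3_x$ together with $L^\infty_tL\log L$. Next I would apply the duality lemma of Pierre to the sum $\sum a_i$ (or to pairwise sums): $\partial_t(a_1+a_2)-\Delta(d_1a_1+d_2a_2)=0$ gives $a_i\in L^2_{t,x}$. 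In fact $a_i\in L^{2+\varepsilon}$ by the improved duality argument, and $\sqrt{a_i}\in L^2H^1$ is also available.

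\emph{Higher integrability (main obstacle).} Here I would use the interpolation idea mentioned in the introduction. I would compute $\frac{d}{dt}\int a_i^{p}$, or better a weighted functional $\int \sum_i c_i a_i^{p}$ or $\int (a_1a_3)^{q}$ style quantities exploiting the sign structure, to obtain $L^\infty_tL^p_x$ bounds for some $p>d/2$ (so $p>3/2$ suffices in $d=3$). Concretely, I would test the equation for $a_i$ against $a_i^{p-1}$. The reaction term is bounded by $\int a_2a_4a_i^{p-1}$ for $i$ odd, and similarly for the other indices. This term would be controlled using the gradient term $\int|\nabla a_i^{p/2}|^2$ together with the available $L^2_tH^1_x$ bound on $\sqrt{a_j}$, through a Gagliardo--Nirenberg interpolation that is valid precisely because $d\le3$. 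I expect the delicate point to be closing this estimate with the differing $d_i$. I would first try $p=2$ in the combination $\int a_1a_3$ or $\int (a_1+a_2)^2$-type quantities, taking the exponents close to the critical threshold and using Gronwall's inequality.

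\emph{From $L^p$ to $L^\infty$.} Once the reaction term $a_1a_3-a_2a_4$ lies in $L^r_{t,x}$ with $r>(d+2)/2$, maximal regularity or Duhamel heat kernel estimates for the Neumann Laplacian give $a_i\in L^\infty$. If the first gain falls short of this, I would iterate: a gain $a_i\in L^{q}$ improves to $a_i\in L^{q'}$ with $1/q'=2/q-2/(d+2)$, and this improves strictly once $q>(d+2)/2$ is reached via the previous step (for $d\le3$, $L^{5/2+}$). The resulting $L^\infty$ bound rules out $T^*<\infty$. Uniqueness follows from a Gronwall estimate on the difference of two bounded solutions.
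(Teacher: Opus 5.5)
\paragraph{Verdict: there is a genuine gap in the higher-integrability step.}
Your outer framework is sound: local existence with a blow-up alternative, nonnegativity by quasi-positivity, Gronwall uniqueness, and a final heat-kernel bootstrap. You also identify the bootstrap threshold correctly: $a_i\in L^{5/2+\delta}_{t,x}$, or $L^\infty_tL^{3/2+\delta}_x$, in $d=3$. The gap is the step you yourself call the main obstacle. Nothing in your proposal produces integrability above that threshold, and the mechanism you sketch is circular.

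\paragraph{Why your $L^p$ testing argument does not close.}
Testing the $a_i$-equation against $a_i^{p-1}$ leaves reaction terms such as $\int a_2a_4a_1^{p-1}$, of total degree $p+1$. The species are coupled, so in the end you must bound $\sum_j\int_0^T\!\!\int a_j^{p+1}$ by the dissipation. Set $v=a^{p/2}$ and apply Gagliardo--Nirenberg against a known bound $a\in L^\infty_tL^m_x$. In $d=3$ this gives
\[
\int_0^T\!\!\int_\Omega a^{p+1}\lesssim\int_0^T\|v\|_{H^1}^{\kappa}\,dt,\qquad \kappa=\frac{6(p+1-m)}{3p-m}.
\]
Absorbing this into $\int_0^T\|\nabla v\|_{L^2}^2$ requires $\kappa<2$, which holds if and only if $m>3/2$. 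If you instead interpolate a space-time bound $a\in L^s_{t,x}$ with the parabolic embedding into $L^{10/3}_{t,x}$, the estimate closes if and only if $s>5/2$.

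The bounds you actually have are all below these thresholds:
\begin{itemize}
\item $L^\infty_tL\log L$ from the entropy;
\item $L^{5/3}_{t,x}$ from $\sqrt{a_i}\in L^2_tH^1_x$;
\item $L^{2+\var}_{t,x}$ from improved duality.
\end{itemize}
So the right-hand side is superlinear in the dissipation and cannot be absorbed; the step presupposes the very bound it is meant to produce. The variants you list ($\int(a_1a_3)^q$, $\int(a_1+a_2)^2$) do not change this scaling. For $d_1\neq d_2$ the second one does not even have signed dissipation, since $\int\nabla(a_1+a_2)\cdot\nabla(d_1a_1+d_2a_2)$ is an indefinite quadratic form in $(\nabla a_1,\nabla a_2)$. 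This is exactly the known obstruction for quadratic systems in dimension three.

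\paragraph{How the paper gets past the threshold.}
The paper uses a genuinely supercritical estimate drawn from the entropy structure.
\begin{enumerate}
\item Integrate the local entropy identity $\pa_tE-\Delta\big[\sum_id_i(a_i\ln a_i-a_i+1)\big]\le0$ in time. The potential $w=\int_0^t\sum_id_i(a_i\ln a_i-a_i+1)$ then satisfies $0\le E\le\Delta w+\|E^{in}\|_\infty$ with Neumann data.
\item A comparison argument gives $0\le w\le(\max d_i)\|E^{in}\|_\infty T$. It uses $\pa_tw\ge0$, $a\ge(\max d_i)^{-1}$, and the equation $a\,\pa_tw-\Delta w=E^{in}-\int_0^tp$, where $p\ge0$ now denotes the entropy dissipation.
\item This $L^\infty$ bound on $w$ feeds Proposition~\ref{incwd}, $\int E^3\le20\|w\|_\infty\int|\nabla E|^2+9C^3|\Omega|$. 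Unlike Gagliardo--Nirenberg, this is \emph{linear} in the Dirichlet energy.
\item Combine it with the functional $\sum_i\int f(a_i)$, where $f''=(\ln x)^2$. Its dissipation controls $\int|\nabla E|^2$. Its reaction term is only $\int a_i^3(\ln a_i)^2$, one logarithm weaker than $E^3\sim a^3|\ln a|^3$.
\item The estimate therefore closes and gives $a_i^3|\ln a_i|^3\in L^1_{t,x}$. This is $L^3$ space-time integrability, above the $5/2$ threshold.
\item The bootstrap $3\to15/4\to15/2\to\infty$ and Schauder estimates then finish the proof.
\end{enumerate}
Without this ingredient, or some other estimate that beats the critical scaling, your proposal does not prove the theorem for $d=3$.
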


The paper is structured as follows: First we state an interpolation inequality
in Section~\ref{sec2}, and prove it by elementary means. Then Section \ref{sec3}
is devoted to the main a priori estimate for the system. Finally, we build
strong solutions to it in Section~\ref{sec4}, and conclude there the proof of
Theorem~\ref{ercwd}. Note that the two first sections present results which hold
in any dimension of space.

\section{Interpolation inequalities} \label{sec2}

This section is devoted to the proof of the interpolation inequality below:

\begin{Prop}\label{incwd}
  Let $\Omega \subset \R^d$ be a %
Lipschitz bounded open set (with $|\Omega|$ its measure, and $n(x)$
  the outward unit normal vector at point $x \in \pa\Omega$), and $C>0$.  We suppose
  that $E = E(x)$ and $w = w(x)$ are smooth functions defined on $\bar \Omega$
  satisfying
  \begin{equation}\label{hypo1}
    \forall x\in \Omega, \qquad 0 \le E(x) \le \Delta w(x) + C, 
  \end{equation}
  \begin{equation}\label{hypo2}
    \nabla w(x) \cdot n = 0, \qquad \text{a.e.~ in } \partial\Omega. 
  \end{equation}
  Then the following interpolation inequality holds:
  \begin{equation}\label{estip1}
    \int_{\Omega} E^3 \le  20\, \|w\|_{\infty} \,   \int_{\Omega} |\nabla E|^2 + 9 C^3\, |\Omega| . 
  \end{equation}
\end{Prop}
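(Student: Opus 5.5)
Our plan is to multiply the inequality $E \le \Delta w + C$ by $E^2 \ge 0$ and integrate by parts, using the Neumann condition \eqref{hypo2} to kill the boundary term. This gives
\begin{equation*}
\int_\Omega E^3 \le \int_\Omega E^2\,\Delta w + C\int_\Omega E^2 = -2\int_\Omega E\,\nabla E\cdot\nabla w + C\int_\Omega E^2 .
\end{equation*}
The last term is harmless. By Young's inequality, $C E^2 \le \frac13 E^3 + \frac{4}{9}\cdot 3\, C^3$ (roughly), so it can be absorbed into the left-hand side at the cost of a multiple of $C^3|\Omega|$. The real work is the gradient term $\int E\,\nabla E\cdot\nabla w$, which must be controlled by $\|w\|_\infty \int|\nabla E|^2$ plus a fraction of $\int E^3$. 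The difficulty is that we have no bound on $\nabla w$, only on $w$ itself.

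The key step is therefore a bound on $\int E\,|\nabla w|^2$ (or similar) in terms of $\|w\|_\infty$. By Cauchy--Schwarz,
\begin{equation*}
\Bigl|\int E\,\nabla E\cdot\nabla w\Bigr| \le \Bigl(\int |\nabla E|^2\Bigr)^{1/2}\Bigl(\int E^2|\nabla w|^2\Bigr)^{1/2}.
\end{equation*}
To estimate $\int E^2|\nabla w|^2$, we integrate by parts once more, writing $E^2|\nabla w|^2 = E^2\,\nabla w\cdot\nabla w$ and moving one gradient off $w$ (again with no boundary term, by \eqref{hypo2}):
\begin{equation*}
\int E^2|\nabla w|^2 = -\int w\,E^2\,\Delta w - 2\int w\,E\,\nabla E\cdot\nabla w .
\end{equation*}
In the first term we use $\Delta w \ge E - C \ge -C$. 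Since we need an upper bound and $w$ has no sign, we instead bound $|w|\le\|w\|_\infty$. This is where care is needed. Subtracting a constant from $w$ changes neither hypothesis, so we may replace $w$ by $w - \min w$ and assume $0\le w\le 2\|w\|_\infty$; the constant $20$ presumably absorbs such factors. With $w\ge0$ we get $-\int wE^2\Delta w \le -\int wE^2(E-C) \le C\int wE^2$, which discards the good $-\int wE^3$ term. The second term is bounded by $2\|w\|_\infty (\int|\nabla E|^2)^{1/2}(\int E^2|\nabla w|^2)^{1/2}$.

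Setting $X = (\int E^2|\nabla w|^2)^{1/2}$ and $G = (\int|\nabla E|^2)^{1/2}$, we obtain the quadratic inequality
\begin{equation*}
X^2 \le 2\|w\|_\infty G X + C\|w\|_\infty\int E^2 ,
\end{equation*}
hence $X \lesssim \|w\|_\infty G + (C\|w\|_\infty\int E^2)^{1/2}$. Plugging this into the first estimate gives
\begin{equation*}
\int E^3 \lesssim \|w\|_\infty G^2 + G\,\bigl(C\|w\|_\infty\textstyle\int E^2\bigr)^{1/2} + C\int E^2 .
\end{equation*}
The middle term is handled by Young's inequality: it is at most $\|w\|_\infty G^2 + C\int E^2$. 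The remaining $C\int E^2$ is then absorbed as in the first paragraph via $CE^2\le \varepsilon E^3 + c_\varepsilon C^3$.

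The main obstacle is this bookkeeping of constants, namely obtaining exactly $20$ and $9$. We would first try to run the argument with explicit Young parameters and optimize them. We would also check whether the shift of $w$ costs a factor $2$ in $\|w\|_\infty$, or whether one can avoid the shift by using $|w|\le\|w\|_\infty$ directly, keeping the term $-\int wE^2\Delta w$ bounded by $\|w\|_\infty\int E^2|\Delta w|$. That alternative fails, however, since $\Delta w$ is unbounded above. So the shift to $w\ge0$, which uses the one-sided bound $\Delta w\ge -C$, is the approach we would commit to.
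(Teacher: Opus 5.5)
Your proposal is correct and is essentially the paper's proof. Both shift to $\bar w=w-\inf w\ge 0$, test $E\le \Delta\bar w+C$ against $E^2$, and bound $\int_\Omega E^2|\nabla\bar w|^2$ by a second integration by parts against $\bar w$ using $\bar w\,(\Delta\bar w+C)\ge 0$. The differences are cosmetic. The paper absorbs $\frac12\int_\Omega E^2|\nabla\bar w|^2$ by Young instead of solving your quadratic inequality; your version also avoids the paper's division by $\|\bar w\|_\infty$. The paper closes with $3CE\le\frac12E^2+\frac92C^2$ together with $\int_\Omega E\le C|\Omega|$ (integrate \eqref{hypo1} and use $\int_\Omega\Delta w=0$), rather than a pointwise bound $CE^2\le\varepsilon E^3+c_\varepsilon C^3$.

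The constants do close. Your chain gives $\int_\Omega E^3\le 5\|\bar w\|_\infty G^2+2C\int_\Omega E^2$, and the paper's finishing step then yields $20\|w\|_\infty$ and even $4C^3|\Omega|$. Your pointwise Young with $\varepsilon=\frac12$ would only give $\frac{256}{27}C^3|\Omega|>9C^3|\Omega|$, so on that route you must tune the splits. For example, bound the middle term by $2\|\bar w\|_\infty G^2+\frac12 C\int_\Omega E^2$ and take $\varepsilon=\frac25$, which gives $20\|w\|_\infty$ and $\frac{125}{24}C^3|\Omega|$.
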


\begin{proof}[Proof of Proposition~\ref{incwd}]
  Let \(\bar w = w - \inf_{\Omega} w\), we see that \(\bar w\) also satisfies the
  assumptions of the proposition and additionally guarantees \(\bar w \ge 0\).

  Using the bound \eqref{hypo1}, we first observe that
  \begin{equation}\label{eq:proof-interpolation:e3}
    \int_{\Omega} E^3
    \le \int_{\Omega} E^2 \Delta \bar w + C\, \int_{\Omega} E^2.
  \end{equation}

  The first term can be estimated by partial integration (recalling
  \eqref{hypo2}) as
  \begin{equation}\label{eq:proof-interpolation:e2lw}
    \int_{\Omega} E^2 \Delta \bar w
    = - 2 \int_{\Omega} E \nabla E \cdot \nabla \bar w
    \le \frac{1}{\| \bar w \|_{\infty}}
    \int_{\Omega} E^2 |\nabla \bar w|^2
    + \| \bar w \|_{\infty} \int_{\Omega} |\nabla E|^2.
  \end{equation}

  We call the first integral \(B\) and estimate it as
  \begin{align*}
    B
    &:= \int_{\Omega} E^2 |\nabla \bar w|^2 
    = - \int_{\Omega} \bar w \nabla \cdot (E^2 \nabla \bar w)\\
    &= - \int_{\Omega} \bar w E^2 \Delta \bar w
      - 2 \int_{\Omega} \bar w E\, \nabla E \cdot \nabla \bar w\\
    &\le - \int_{\Omega} \bar w E^2 (\Delta \bar w +C)
      + \| \bar w \|_{\infty} C \int_{\Omega} E^2
      + \frac{1}{2} \int_{\Omega} E^2 |\nabla \bar w|^2
      + 2 \| \bar w \|_{\infty}^2 \int_{\Omega} |\nabla E|^2.
  \end{align*}
  As \(\bar w \ge 0\) and \(\Delta \bar w + C \ge 0\), the first term has a good sign.
  The third term is \(B/2\) and can thus be absorbed.  Hence we arrive at
  \begin{equation*}
    B \le 2\, \lVert \bar w\rVert_{\infty}
    \left[ C \int_{\Omega} E^2 
    + 2\, \|\bar w\|_{\infty} \int_{\Omega} |\nabla E|^2 \right].
  \end{equation*}

  Plugging the result for \(B\) into \eqref{eq:proof-interpolation:e2lw} for
  \(\int E^2 \Delta w\) yields
  \begin{equation*}
    \int_{\Omega} E^2\Delta \bar w
    \le 
    2C \int_{\Omega} E^2
    + 5\, \| \bar w\|_{\infty} \int_{\Omega} |\nabla E|^2.
  \end{equation*}

  Hence for \(\int E^3\), we find by \eqref{eq:proof-interpolation:e3} that
  \begin{equation*}
    \int_{\Omega} E^3 \le 3C \int_{\Omega} E^2
    + 5\, \| \bar w \|_{\infty} \int_{\Omega} |\nabla E|^2.
  \end{equation*}

  By Young's inequality, we find that \(3EC \le \frac{1}{2} E^2 + \frac{9}{2} C^2\) so that
  \begin{equation*}
    \int_{\Omega} E^3 \le 9 C^2 \int_{\Omega} E
    + 10\, \| \bar w \|_{\infty} \int_{\Omega} |\nabla E|^2.
  \end{equation*}
  Then, noticing that \(\int_{\Omega} E \le C |\Omega|\) thanks to \eqref{hypo1} and observing that
 \(\| \bar w \|_{\infty} \le 2 \| w \|_{\infty}\), we arrive at
  the claimed result.
\end{proof}

\section{A priori estimates} \label{sec3}

In this section, we prove our main a priori estimate, presented in the
proposition below:

\begin{Prop}\label{eapcwd}
  Let $\Omega \subset \R^d$ be a smooth bounded open set (with $n(x)$ the outward unit
  normal vector at point $x \in \pa\Omega$), and $d_i>0, a_i^{in} = a_i^{in}(x) \ge 0$
  ($i=1,\dots,4$), be diffusion rates and smooth initial data which are defined on
  $\bar\Omega$ and compatible with the homogeneous Neumann boundary conditions.  We
  suppose that $T>0$ and $a_i = a_i(t,x) \ge 0$, defined on $[0,T] \times \bar \Omega$, is a
  smooth solution to the parabolic problem, for $i=1,\dots,4$,
  \begin{equation}\label{pp1}
    \pa_t a_i - d_i \Delta a_i = (-1)^i\, (a_1\,a_3 - a_2\, a_4),
  \end{equation}
  \begin{equation}\label{pp2}
    \forall t\in [0,T], x\in \pa\Omega, \qquad  \nabla a_i \cdot n = 0 , 
  \end{equation}
  with initial data $a_i^{in}$.
  \par
  Then  the following estimate holds:
  \begin{multline}\label{estap1}
    \sup_{t\in [0,T]} \int_{\Omega}  \sum_{i=1}^4  |a_i|^2 \, \lvert \ln a_i\rvert^2
    +  \int_0^T \int_{\Omega}  \sum_{i=1}^4  |a_i|^3 \, |\ln a_i|^3\, 1_{\{a_i \ge e^2\}} \\ 
    + \int_0^T  \int_{\Omega} \sum_{i=1}^4   |\nabla (a_i \ln a_i - a_i + 1)|^2
    \le C_{m},
  \end{multline}
  where $C_{m} >0$ depends only on $d$, $|\Omega|$, $T$, $\min d_i$, $\max d_i$,
  $\|a^{in}_i\|_{\infty}$ ($i=1,\dots,4$).
\end{Prop}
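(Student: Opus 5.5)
The plan is to combine the entropy structure with a duality-type construction so that Proposition~\ref{incwd} applies. The first step is the classical entropy identity. Set $\phi(a) = a\ln a - a + 1$ and $H(t) = \sum_i \int_\Omega \phi(a_i)$. Using \eqref{pp1}, \eqref{pp2} we get
\begin{equation*}
\frac{d}{dt} H + \sum_i d_i \int_\Omega \frac{|\nabla a_i|^2}{a_i} + \int_\Omega (a_1a_3 - a_2a_4)\ln\frac{a_1a_3}{a_2a_4} = 0 .
\end{equation*}
This gives $\sup_t \int \phi(a_i) \le C$ and $\int_0^T\!\int |\nabla\sqrt{a_i}|^2\le C$. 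Conservation of $a_1+a_2$, $a_1+a_4$, $a_2+a_3$, $a_3+a_4$ in $L^1$ is also available.

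Next we track the quantities $E_i := \phi(a_i)\ge 0$. A direct computation gives
\begin{equation*}
\pa_t E_i - d_i \Delta E_i = -d_i \frac{|\nabla a_i|^2}{a_i} + (-1)^i \ln a_i\,(a_1a_3-a_2a_4).
\end{equation*}
The key structural observation is that $\sum_i (-1)^i \ln a_i\,(a_1a_3-a_2a_4) = -(a_1a_3-a_2a_4)\ln\frac{a_1a_3}{a_2a_4}\le 0$. Hence $E := \sum_i E_i$ satisfies
\begin{equation*}
\pa_t E - \Delta\Big(\sum_i d_i E_i\Big) \le 0 ,
\end{equation*}
with homogeneous Neumann data. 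Writing $M = \sum_i d_i E_i / E \in [\min d_i, \max d_i]$, we are in the setting of the standard duality lemma (Pierre's estimate). Since $d\le 3$ is not yet used, I would instead follow the approach suggested by Proposition~\ref{incwd}. Define $w(t,x) := \int_0^t \sum_i d_i E_i(s,x)\,ds$. Integrating the inequality in time gives
\begin{equation*}
E(t) \le E(0) + \Delta w(t) \le \|E(0)\|_\infty + \Delta w(t),
\end{equation*}
with $\nabla w\cdot n = 0$ on $\pa\Omega$. So \eqref{hypo1}, \eqref{hypo2} hold with $C = \|E(0)\|_\infty$, which is controlled by $\|a_i^{in}\|_\infty$.

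The crux is bounding $\|w(t)\|_\infty$. Since $w\ge0$, we need a pointwise upper bound on $\int_0^T E$. The plan is to use the duality/comparison trick: $\pa_t w - M\Delta w \le M\,\|E(0)\|_\infty$ pointwise, but with a nonsmooth coefficient $M$. A maximum principle for $\pa_t w - M\Delta w \le$ const with $M$ bounded between $\min d_i$ and $\max d_i$ does not directly bound $w$ in $L^\infty$. I would first try to get $L^\infty$ bounds on $w$ from the time-integrated inequality itself: $-\Delta w(t) \le \|E(0)\|_\infty$ together with $\int_\Omega w(t) \le T\max d_i\,\sup_t\int E \le C$ from the entropy bound. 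A function with $-\Delta w\le C$, Neumann conditions, and bounded mean is bounded above via the Neumann Green function. We write $w = \bar w_{\Omega} + G*(-\Delta w)$, where $\bar w_\Omega$ is the spatial mean and the Neumann Green function $G$ is bounded below on bounded smooth domains. This gives $\sup w \le C(T,\Omega,d_i,a^{in})$ in any dimension. I expect this step, the sign bookkeeping in the Green representation, to be the main obstacle.

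Applying Proposition~\ref{incwd} at each $t$ then gives $\int_\Omega E^3 \le C\int_\Omega |\nabla E|^2 + C$. The final step is an energy estimate on $\int E^2$. Multiply the $E_i$ equations by suitable weights (or work with $\sum_i E_i^2$ directly). The reaction terms produce $\ln a_i\, a_1a_3$-type contributions, bounded by $C\sum_i a_i^2|\ln a_i|^2 \lesssim C E^2 + C$ on $\{a_i\ge e^2\}$. The dissipation gives $\int|\nabla E_i|^2$. For the cubic term, control $E^3$ by $\epsilon$-small multiples of the gradient via the interpolation inequality with a small factor, obtained by splitting $w$ over short time intervals so that $\|w\|_\infty$ on each piece is small. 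Gronwall then yields the $\sup_t\int a_i^2|\ln a_i|^2$ and $\int\!\int|\nabla\phi(a_i)|^2$ bounds. Feeding these back into Proposition~\ref{incwd} gives the $a_i^3|\ln a_i|^3 1_{\{a_i\ge e^2\}}$ integral, since there $E_i \gtrsim a_i\ln a_i$.
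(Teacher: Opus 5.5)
\textbf{Gap: the closing energy estimate.} Your first half matches the paper: the local entropy inequality, $w=\int_0^t\sum_i d_iE_i$, and $0\le E\le \Delta w+\|E^{in}\|_\infty$ with Neumann data are exactly what it uses.

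Your Neumann Green function bound on $\|w\|_\infty$ is valid, since that Green function is bounded below on smooth bounded domains. It is also unnecessary, because the maximum principle does work directly. From $\pa_t w=ME\le M\|E^{in}\|_\infty+M\Delta w$ with $0<M\le\max d_i$, comparison with $(\max d_i)\|E^{in}\|_\infty t$ gives $0\le w\le(\max d_i)\|E^{in}\|_\infty T$. This is the paper's argument. It also keeps the constant depending only on $|\Omega|$, not on the geometry of $\Omega$ as the Green function bound does.

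The genuine gap is the closing step. At the level $\int a_i^2|\ln a_i|^2$, the paper uses $f$ with $f''(x)=(\ln x)^2$, so the dissipation is exactly $\sum_i d_i\int|\nabla(a_i\ln a_i-a_i+1)|^2$. The reaction contribution is then $\sum_i(-1)^i f'(a_i)(a_1a_3-a_2a_4)$, which is cubic, of size $\sum_i a_i^3(\ln a_i)^2$. It is not $\lesssim E^2+C$, so Gronwall cannot close. You must use $\int_\Omega E^3\le K_1\int_\Omega|\nabla E|^2+K_2$ with $K_1=20\|w\|_\infty\sim T$, and $K_1$ is not small.

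Your remedy of restarting on short intervals fails after the first interval. With $w_k=\int_{t_k}^t\sum_i d_iE_i$ you only get $E(t)\le E(t_k)+\Delta w_k(t)$. So Proposition~\ref{incwd} needs $\|E(t_k)\|_\infty$ as its constant $C$. The smallness bound $\|w_k\|_\infty\le(\max d_i)\|E(t_k)\|_\infty(t-t_k)$ needs it too. No such bound is available from the data; it is essentially what one is trying to prove.

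The missing idea is the logarithmic gain. On $\{a_i\ge e^2\}$ one has $a_i^3|\ln a_i|^3\le 8E^3$, while the reaction term carries only $(\ln a_i)^2$. So for a level $R\ge e^2$,
\[ \int_0^T\!\!\int_\Omega a_i^3(\ln a_i)^2 1_{\{a_i\ge e^2\}}\le \frac{1}{\ln R}\int_0^T\!\!\int_\Omega a_i^3|\ln a_i|^3 1_{\{a_i\ge e^2\}}+R^3(\ln R)^2\,T|\Omega| . \]
Taking $\ln R$ proportional to $K_1/\min d_i$ absorbs the cubic term for any $T$, with no smallness of $\|w\|_\infty$ required. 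The $\sup_t$ and gradient bounds then follow from the same energy identity. This is how the paper closes.
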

\medskip

 {\bf{Proof of Proposition \ref{eapcwd}}}: 
We first observe that we have the (local in $x$) entropy relation
\begin{equation}\label{entropy}
 \pa_t \left[ \sum_{i=1}^4 (a_i\,\ln a_i - a_i + 1)\, \right]  - \Delta \left[ \sum_{i=1}^4 d_i\, (a_i\,\ln a_i - a_i + 1)\, \right] = -p,
\end{equation}
where 
\begin{equation}\label{entropy_p}
 p :=  (a_1\,a_3 - a_2\, a_4)\, (\ln(a_1\,a_3) - \ln(a_2\,a_4)) +  \sum_{i=1}^4  d_i\, \frac{|\nabla a_i|^2}{a_i}  \ge 0. 
\end{equation}
 We define
\begin{equation}\label{defw}
 w := \int_0^t  \left[ \sum_{i=1}^4 d_i\, (a_i\,\ln a_i - a_i + 1)  \, \right] \ge 0,
\end{equation}
and observe that 
 \begin{equation}\label{dtw}
 \pa_t  w :=  \sum_{i=1}^4 d_i\, (a_i\,\ln a_i - a_i + 1)  \ge 0 ,
\end{equation}
and, using the entropy relation (\ref{entropy}),
 $$ \Delta  w := \int_0^t  \Delta\, \left[ \sum_{i=1}^4 d_i\, (a_i\,\ln a_i - a_i + 1)  \, \right]   $$
 \begin{equation}\label{deltaw} 
    =  E - E^{in}  + \int_0^t p,
\end{equation}
where 
\begin{equation}\label{def_entropy}
  E := \sum_{i=1}^4 (a_i\,\ln a_i - a_i + 1), \qquad  E^{in} := \sum_{i=1}^4 (a_i^{in}\,\ln a_i^{in} - a_i^{in} + 1),
\end{equation}
are respectively  the entropy density and the initial entropy density.
\medskip

As a consequence, $w$ satisfies the parabolic problem (on $[0,T] \times \Omega$)
\begin{equation}\label{npare}
 a \,\pa_t w  - \Delta w = E^{in} -  \int_0^t p, 
\end{equation}
\begin{equation}\label{npare2}
\forall  t\in [0,T],  x\in \pa\Omega, \quad  \nabla w \cdot n = 0 , \qquad \qquad w(0, \cdot) = 0 ,
\end{equation}
where 
$$ a : = \frac{\sum_{i=1}^4  (a_i\,\ln a_i - a_i + 1) }{ \sum_{i=1}^4 d_i\, (a_i\,\ln a_i - a_i + 1)}\ge (\max d_i)^{-1}.$$
Defining $ \bar w (t,x) :=  (\max d_i) \,\|E^{in}\|_{\infty} \, t $, we see that $\bar w$ satisfies  the parabolic problem
\begin{equation}\label{npareb}
  (\max d_i)^{-1}\,\pa_t  \bar w  - \Delta\bar  w = \|E^{in}\|_{\infty}, 
\end{equation}
\begin{equation}\label{npare2b}
 \forall  t\in [0,T],  x\in \pa\Omega, \quad  \nabla \bar w \cdot n = 0 , \qquad  \qquad \bar w(0, \cdot) = 0 . 
\end{equation}
Keeping in mind that $\pa_t w \ge 0$ thanks to \eqref{dtw},  we get the estimate
\begin{align*}
  (\max d_i)^{-1}\, \pa_t (\bar w - w) - \Delta (\bar w - w)
  &= \|E^{in}\|_{\infty} - [ (\max d_i)^{-1} - a] \, \pa_t  w
    - [a\, \pa_t w - \Delta w] \\
  &\ge \int_0^t p + (\|E^{in}\|_{\infty} - E^{in}) \ge 0, 
\end{align*}
so that using the boundary
and initial conditions \eqref{npare2} and \eqref{npare2b},
we see thanks to the comparison principle that for $t \in [0,T]$, $x \in \Omega$,
\begin{equation}\label{winf}
 0 \le w(t,x) \le \bar w(t, x) =  (\max d_i) \,\|E^{in}\|_{\infty} \, t \le  (\max d_i) \,\|E^{in}\|_{\infty} \, T. 
\end{equation}
Observing that thanks to \eqref{deltaw}, we have 
\begin{equation}\label{eew}
 0 \le E(t,x) \le \Delta w(t, x) +  \|E^{in}\|_{\infty} ,
\end{equation}
and keeping in mind the boundary condition in \eqref{npare2} and estimate
\eqref{winf}, we can use Proposition~\ref{incwd} (for any $t \in [0,T]$) and get the estimate
\begin{equation}\label{neee}
\int_{\Omega} E^3  \le 
K_1 \,   \int_{\Omega} |\nabla E|^2 + K_2 , 
\end{equation}
where
\begin{equation}\label{k1k2}
K_1 := 20\, (\max d_i) \,\|E^{in}\|_{\infty} \, T, \qquad K_2 := 9 \, \|E^{in}\|_{\infty}^3\, |\Omega| .
\end{equation}
\medskip

We now use an energy type estimate for system \eqref{pp1}, \eqref{pp2}.
\par
For this we introduce (when $x>0$) 
\begin{equation}\label{eff}
 f(x) := \frac12\, x^2\,( \ln x)^2 - \frac32\, x^2\, \ln x + \frac74\, x^2,
\end{equation}
so that 
$$ f'(x) = x\,\,( \ln x)^2 - 2\, x\, \ln x + 2x, \qquad f''(x) = (\ln x)^2 . $$
Note that $f$ and $f'$ can be extended by $0$ at point $x=0$ by continuity and are strictly increasing on $[0, \infty[$, and strictly positive on $]0, \infty[$. 
\par
We see that
$$ \frac{d}{dt} \sum_{i=1}^4  \int_{\Omega} f(a_i) = \sum_{i=1}^4  \int_{\Omega} f'(a_i) \, [d_i \Delta a_i + (-1)^i \, (a_1\,a_3 - a_2\,a_4)] $$
$$ =  -\sum_{i=1}^4  d_i \int_{\Omega} (\ln a_i)^2\, |\nabla a_i|^2 + \sum_{i=1}^4  \int_{\Omega} f'(a_i) \, (-1)^i \, (a_1\,a_3 - a_2\,a_4) $$
\begin{equation}\label{vtcinq}
=  -\sum_{i=1}^4  d_i \int_{\Omega} |\nabla (a_i \ln a_i - a_i + 1)|^2 + \sum_{i=1}^4  \int_{\Omega} f'(a_i) \, (-1)^i \, (a_1\,a_3 - a_2\,a_4) ,
\end{equation}
so that after integration in time
$$ \sum_{i=1}^4  \int_{\Omega} f(a_i) (T)  + \int_0^T \sum_{i=1}^4  d_i \int_{\Omega} |\nabla (a_i \ln a_i - a_i + 1)|^2 $$
\begin{equation}\label{vtsix}
= \sum_{i=1}^4  \int_{\Omega} f(a_i) (0)  + \sum_{i=1}^4  (-1)^i \int_0^T\int_{\Omega} f'(a_i) \,  (a_1\,a_3 - a_2\,a_4).
\end{equation}
At this level we estimate (recalling that $f'$ is increasing and nonnegative)
$$ \sum_{i=1}^4  (-1)^i \int_0^T\int_{\Omega} f'(a_i) \,  (a_1\,a_3 - a_2\,a_4) \le \frac12 \sum_{i=1}^4  \int_0^T\int_{\Omega} f'(a_i)\, \sum_{j=1}^4 a_j^2 $$
$$ \le  \frac12 \sum_{i=1}^4  \sum_{j=1}^4  \int_0^T\int_{\Omega} [f'(a_i) \, a_j^2 \, 1_{\{a_i \ge a_j\}} + f'(a_i) \, a_j^2 \, 1_{\{a_i < a_j\}}] $$
\begin{equation}\label{vtsept}
\le 4  \sum_{i=1}^4   \int_0^T\int_{\Omega} f'(a_i) \, a_i^2 .
\end{equation}
Finally, thanks to (\ref{vtsix}), (\ref{vtsept}), 
\begin{multline}\label{vthuit} 
  \frac14\, (\min d_i) \,  \int_0^T\int_{\Omega}  |\nabla E|^2  \le (\min d_i) \, \sum_{i=1}^4   \int_0^T\int_{\Omega}  |\nabla (a_i \ln a_i - a_i + 1)|^2  \\
  \le \sum_{i=1}^4  \int_{\Omega} f(a_i^{in}) + 4 \sum_{i=1}^4   \int_0^T
  \int_{\Omega} f'(a_i) \, a_i^2  .
\end{multline}
and estimate (\ref{neee}) ensures that
$$  \int_0^T\int_{\Omega}  E^3  \le 
K_3 + K_4 \sum_{i=1}^4  \int_0^T \int_{\Omega} f'(a_i) \, a_i^2 , $$
with
\begin{equation}\label{lllb}
 K_4 := 16 K_1\, (\min d_i)^{-1} \qquad  K_3 := 4 K_1\, (\min d_i)^{-1}\, \sum_{i=1}^4  \int_{\Omega} f(a_i^{in}) + K_2\, T. 
\end{equation}
 Observing that when $x \ge e^2$,
$$ x\, \ln x - x + 1 \ge \frac12\, x\, \ln x , \qquad f'(x)\, x^2 \le x^3\, (\ln x)^2, $$
we see for any $M \ge e^2$ that
$$  \int_0^T\int_{\Omega}  \sum_{i=1}^4  a_i^3 \, \lvert \ln a_i \rvert^3 \, 1_{\{a_i \ge e^2\}}  \le 8 \int_0^T\int_{\Omega}  E^3  $$
$$ \le 8 K_3 + 8 K_4  \sum_{i=1}^4  \int_0^T \int_{\Omega} f'(a_i) \, a_i^2 \, 1_{\{a_i \le e^2\}} 
   + 8 K_4  \sum_{i=1}^4  \int_0^T \int_{\Omega} f'(a_i) \, a_i^2 \, 1_{\{a_i \ge e^2\}} $$ 
$$ \le 8 K_3 + 32 K_4 T\, |\Omega| \, f'(e^2)\, e^4  + 8 K_4  \sum_{i=1}^4  \int_0^T \int_{\Omega}  a_i^3 \, (\ln a_i)^2 \,1_{\{a_i \ge M \ge e^2\}}  $$
$$  +\, 8 K_4  \sum_{i=1}^4  \int_0^T \int_{\Omega}  a_i^3 \, (\ln a_i)^2  \, 1_{\{M \ge a_i \ge e^2\}} $$ 
$$ \le 8 K_3 + 64 K_4 T\, |\Omega| \,  e^6
+ \frac{8 K_4}{\ln M}  \sum_{i=1}^4  \int_0^T \int_{\Omega}  a_i^3 \, \lvert \ln a_i \rvert^3 \, 1_{\{a_i \ge e^2\}}   
  +\, 8 K_4  M^3\, (\ln M)^2 \, |\Omega|\, T . $$
Taking 
$M := \max(\exp (16\, K_4), e^2)$
we end up with
$$   \int_0^T\int_{\Omega}  \sum_{i=1}^4  a_i^3 \, |\ln a_i|^3 \, 1_{\{a_i \ge e^2\}}  \le C_{n}, $$
where 
\begin{equation}\label{llla}
  C_{n} :=  16 K_3 + 128\,e^6\, K_4 \, |\Omega|\,T
  + 8\, K_4\, \max(16  K_4, 2)^2 \, |\Omega|\, T \, \max( \exp (48\, K_4), e^6) . 
\end{equation}
Using now estimates~\eqref{vtsix} and \eqref{vtsept}, together with the elementary inequality $f(x) \ge \frac18 \, x^2\, (\ln x)^2$, we see that 
$$ \frac18\,  \sup_{t\in [0,T]} \int_{\Omega} \sum_{i=1}^4 |a_i|^2 \, |\ln a_i|^2 + (\min d_j) \, \sum_{i=1}^4 \int_0^T\int_{\Omega} |\nabla (a_i \ln a_i - a_i + 1)|^2  $$
$$ \le  \sup_{t\in [0,T]}   \sum_{i=1}^4  \int_{\Omega} f(a_i)  +   \sum_{i=1}^4 d_i\, \int_0^T   \int_{\Omega} |\nabla (a_i \ln a_i - a_i + 1)|^2  $$
$$ \le  \sum_{i=1}^4  \int_{\Omega} f(a_i^{in})  + 4  \sum_{i=1}^4   \int_0^T\int_{\Omega} f'(a_i) \, a_i^2\, 1_{\{a_i \le e^2\}} + 4  \sum_{i=1}^4   \int_0^T\int_{\Omega} f'(a_i) \, a_i^2\, 1_{\{a_i \ge e^2\}} $$
$$ \le  \sum_{i=1}^4  \int_{\Omega} f(a_i^{in})  + 16\, f'(e^2)\, e^4 + 4  \sum_{i=1}^4  \int_0^T\int_{\Omega}    |a_i|^3 \, |\ln a_i|^2  \, 1_{\{a_i \ge e^2\}}   \le C_m, $$
where
\begin{equation}\label{lll}
 C_m : = \sum_{i=1}^4  \int_{\Omega} f(a_i^{in})  + 32\, e^6 + 16\, C_n . 
\end{equation}
One can check from the proof that the constant $C_m$ indeed depends on the
announced parameters of the model in the statement of Proposition~\ref{eapcwd},
thanks to formulas \eqref{lll}, \eqref{llla}, \eqref{lllb} and \eqref{k1k2}.
\hfill $\square$

\section{Existence and regularity of solutions} \label{sec4}

In this section, we present the end of the
\medskip

 {\bf{Proof of Theorem \ref{ercwd}}}:  We consider the unique smooth global (in $C^{\infty}(\R_+ \times {\ov{\Omega}})$) solution of the approximated problem
\begin{equation}\label{pp1napprox}
\pa_t a_i^\var - d_i \Delta a_i^\var = (-1)^i\,  \frac{a_1^\var\,a_3^\var - a_2^\var\, a_4^\var}{1 + \var\, \sum_{i=1}^4 (a_i^\var)^2},
\end{equation}
\begin{equation}\label{pp2napprox}
 \forall t \ge 0, x\in \pa\Omega, \qquad  \nabla a_i^\var \cdot n = 0 , 
\end{equation}
with initial data $a_i^{in}$. Note that the existence of such solutions can be obtained from example from \cite{desv}, since the right-hand side, when $\var>0$ is given, is bounded.
\medskip

One can easily check that the estimate of Proposition \ref{eapcwd} still holds
for $a_i^\var$, uniformly with respect to $\var$, so that 
\begin{equation*}
  \int_0^T \int_{\Omega}  \sum_{i=1}^4  |a_i^\var|^3 \, |\ln a_i^\var|^3\, 1_{\{a_i^\var \ge e^2\}} \le C_{m} . 
\end{equation*}
Then, we observe that if (for some $p \ge 2$, $T>0$) $ \sum_{i=1}^4 a_i^\var$ is
bounded in $L^p([0,T] \times \Omega)$ (here and later, uniformly in $\var > 0$), then
$\frac{a_1^\var\,a_3^\var - a_2^\var\, a_4^\var}{1 + \var\, \sum_{i=1}^4 (a_i^\var)^2}$
is bounded in $L^{\frac{p}{2}}([0,T] \times \Omega)$, and, thanks to the properties of the
heat kernel, in dimension $d\leq 3$, $ \sum_{i=1}^4 a_i^\var$ is bounded in $L^q([0,T] \times \Omega)$ for any
$q \ge 1$ such that $\frac1q > \frac2p - \frac25$.

Starting from $p_0 := 3$ and defining $\frac1{p_{n+1}} = \frac2{p_n} - \frac25$,
we see that $ \sum_{i=1}^4 a_i^\var$ is bounded in $L^{q}([0,T] \times \Omega)$ for $q<p_n$
(for all $n$ such that $p_n^{-1} >0$), that is
$p_1 = \frac{15}4$, $p_2 = \frac{15}2$, and $ p_3^{-1}<0$. A last use of the
properties of the heat kernel shows that for all $i=1,..,4$, $a_i^\var$ is
bounded in $C^{0,\alpha}(\R_+ \times {\ov{\Omega}})$ for some $\alpha \in ]0,1[$.

Schauder estimates used inductively finally yield that $a_i^\var$
is bounded in $C^{p}(\R_+ \times {\ov{\Omega}})$ for all $p\in \N$.  As a consequence,
$a_i^\var$ converges (up to extraction of a subsequence, and uniformly on $[0,T] \times \overline{\Omega}$ for all $T>0$ together with its derivatives of any order) towards some
$a_i \in C^{\infty}(\R_+ \times {\ov{\Omega}})$ which solves (in the classical sense) the original
system.
\medskip

Uniqueness can be obtained by a direct computation of stability: if $a_i^{(1)}$
and $a_i^{(2)}$ are two classical solutions of the original system with initial
data $a_i^{(1), in}$ and $a_i^{(2), in}$, then
\begin{multline*}
  \pa_t (a_i^{(1)} - a_i^{(2)}) - d_i\, \Delta (a_i^{(1)} - a_i^{(2)}) \\
  = (-1)^i \, [(a_1^{(1)} - a_1^{(2)})\, a_3^{(1)} + (a_3^{(1)} - a_3^{(2)})\, a_1^{(2)} - (a_2^{(1)} - a_2^{(2)})\, a_4 ^{(1)} -  (a_4^{(1)} - a_4^{(2)})\, a_2^{(2)} ], 
\end{multline*}
so that 
\begin{multline*}
 \frac12 \, \frac{d}{dt} \int_{\Omega} \sum_i |a_i^{(1)} - a_i^{(2)}|^2 +  \int_{\Omega}  \sum_i  d_i \, |\nabla(a_i^{(1)} - a_i^{(2)})|^2 \\
\le   \max_{i,j} \|a_i^{(j)}\|_{\infty} \, \int_{\Omega} \sum_i |a_i^{(1)} - a_i^{(2)}|^2,
\end{multline*}
and one concludes thanks to Gronwall's lemma (applied on $[0,T]$ for any $T>0$).
\hfill $\square$

\end{document}